\documentclass[11]{amsart}

\usepackage{amsfonts}
\usepackage{amscd}
\usepackage{amsmath, mathrsfs, amssymb}
\usepackage{amsthm}
\usepackage{setspace}
\usepackage{hyperref}
\usepackage{color}
\usepackage{epsfig}
\usepackage{here}
\usepackage{graphicx}
\usepackage[all]{xy}
\usepackage{psfrag}
\usepackage{graphicx,transparent}
\usepackage{enumerate}
\usepackage{caption}
\usepackage[style=alphabetic, isbn=false, doi=false, url=false, backend=bibtex, maxnames=5, maxalphanames=5, giveninits=true]{biblatex}
           %\defbibheading{bibliography}[\bibname]{\section*{References}}

\theoremstyle{plain}
\newtheorem{theorem}{Theorem}[section]

\newtheorem{corollary}[theorem]{Corollary}
\newtheorem{claim}[theorem]{Claim}

\theoremstyle{definition}

\newcommand{\PP}{\mathcal P}

\newcommand{\BPP}{\overline{\mathcal P}}
\newcommand{\BNN}{\overline{\mathcal N}}

\newcommand{\calN}{\mathcal N}

\newcommand{\GL}{\operatorname{GL}}

\newcommand{\bbC}{\mathbb C}

\newcommand{\bbR}{\mathbb R}

\newcommand{\MS}{\mathcal{MS}}

\newcommand{\TPP}{\widetilde{\mathcal P}}
\newcommand{\TNN}{\widetilde{\mathcal N}}

\title[]{Connectedness of the boundaries of the strata of differentials}

\addbibresource{connectedness.bib}

\begin{document}

\date{\today}

\author{Dawei Chen}

 %\thanks{This research was supported in part by National Science Foundation Grant DMS-2301030, Simons Travel Support for Mathematicians, and a Simons Fellowship.}
\address{Department of Mathematics, Boston College, Chestnut Hill, MA 02467, USA}
\email{dawei.chen@bc.edu}

\begin{abstract}
Let $\PP(\mu)^{\circ}$ be a connected component of the projectivized stratum of differentials on smooth complex curves, where the zero and pole orders of the differentials are specified by $\mu$. When the complex dimension of $\PP(\mu)^{\circ}$ is at least two, Dozier--Grushevsky--Lee, through explicit degeneration techniques, showed that the boundary of $\PP(\mu)^{\circ}$ is connected in the multi-scale compactification constructed by Bainbridge--Chen--Gendron--Grushevsky--M\"oller. 

A natural question is whether the connectedness of the boundary of $\PP(\mu)^{\circ}$ is determined by its intrinsic properties. In the case of meromorphic differentials, we provide a concise explanation that the boundary of $\PP(\mu)^{\circ}$ is always connected in any complete algebraic compactification, based on the fact that the strata of meromorphic differentials are affine varieties. We also observe that the same result holds for linear subvarieties of meromorphic differentials, as well as for the strata of $k$-differentials with a pole of order at least $k$. 

In the case of holomorphic differentials, using properties of Teichm\"uller curves, we provide an alternative argument showing that the horizontal boundary of $\PP(\mu)^{\circ}$ and every irreducible component of its vertical boundary intersect non-trivially in the multi-scale compactification. 
\end{abstract}

\maketitle
     
%\setcounter{tocdepth}{1}
%\tableofcontents

%%%%%%%%%%%%%%%%%
\section{Introduction and results}
\label{sec:intro}
%%%%%%%%%%%%%%%%%

For $\mu = (m_1, \ldots, m_n)$ with $\sum_{i=1}^n m_i  =  2g-2$, let $\PP(\mu)$ denote the projectivized stratum of differentials on smooth, connected complex curves of genus $g$, whose orders of zeroes and poles are specified by the signature $\mu$. Equivalently, $\PP(\mu)$ parameterizes (possibly meromorphic) canonical divisors of type $\mu$.  

The study of differentials is important in surface dynamics and moduli theory; see \cite{ZorichSurvey, WrightSurvey, ChenSurvey} for introductions to this fascinating subject. Despite various advances, the topology of $\PP(\mu)$ remains mysterious in general. Among the few known results, Kontsevich--Zorich and Boissy classified the connected components of $\PP(\mu)$ in the cases of holomorphic and meromorphic differentials, respectively; see \cite{KontsevichZorich, BoissyConnected}. We use $\PP(\mu)^{\circ}$ to denote a connected component of $\PP(\mu)$, where $\PP(\mu)^{\circ} = \PP(\mu)$ when the latter is irreducible. 

A powerful method for studying an open moduli space is through the boundary of its compactification. We use $\MS(\mu)$ to denote the moduli space of multi-scale differentials, constructed in \cite{BCGGM1, BCGGM3}, which compactifies $\PP(\mu)$ and is a complex orbifold with normal crossings boundary. Since its inception, the multi-scale compactification has served as the foundation for numerous further developments; see \cite{CMSZVolumes, CMZEuler,  BDGLinear, BenirschkeLinear, CCMKodaira}. For a connected component $\PP(\mu)^{\circ}$ of $\PP(\mu)$, we use $\MS(\mu)^{\circ}$ to denote the corresponding component in the multi-scale compactification.  

In \cite{BoissyEnds}, Boissy showed that for all holomorphic signatures $\mu$, every connected component $\PP(\mu)^{\circ}$ of $\PP(\mu)$ has only one topological end. The proof uses ideas from the geometry and dynamics of flat surfaces, including the Veech zippered rectangles construction and the corresponding Rauzy classes. In the recent work \cite{DGLEnds}, Dozier--Grushevsky--Lee recovered Boissy's result and generalized it to the case of meromorphic differentials. Using explicit degeneration techniques, they showed that the boundary of the multi-scale compactification $\MS(\mu)^{\circ}$ is connected whenever $\dim_{\bbC}\MS(\mu)^{\circ}\geq 2$, which immediately implies that $\PP(\mu)^{\circ}$ has only one topological end; see \cite[Lemma 3]{DGLEnds}.  

A natural question is whether the connectedness of the boundary of $\PP(\mu)^{\circ}$ is determined by its intrinsic properties. The purpose of this article is to answer this question in the case of meromorphic differentials. 

Before stating our results, we remark that there are two types of boundary divisors in the multi-scale compactification. One is called the horizontal boundary, which parameterizes multi-scale differentials with at least one horizontal node---meaning that on both branches of the node, the differential has simple poles with opposite residues. The other is called the vertical boundary, which parameterizes multi-scale differentials whose level graphs have at least two levels. 

We denote by $\Delta_{\rm H}$ and $\Delta_{\rm V}$ the horizontal and vertical boundary divisors of $\MS(\mu)^{\circ}$, respectively; see \cite[Section 2.3]{BCGGM3} for their precise definitions. In general, $\Delta_{\rm H}$ and $\Delta_{\rm V}$ can be reducible, and their components may intersect in various ways. We denote by $\Delta'_{\rm H} = \Delta_{\rm H}\setminus (\Delta_{\rm H}\cap \Delta_{\rm V})$ the locus of stable differentials with horizontal nodes only, i.e., those whose level graphs consist of a single level and contain at least one horizontal edge.  

\begin{theorem}
\label{thm:main}
Let $\mu$ be a meromorphic signature such that $\dim_{\bbC} \PP(\mu)^{\circ}\geq 2$. 
\begin{itemize}
\item[(1)] Let ${\BPP(\mu)^{\circ}}$ be an irreducible, complete variety containing $\PP(\mu)^{\circ}$ as an open subset. 
Then the complement of $\PP(\mu)^{\circ}$ in ${\BPP(\mu)^{\circ}}$ is connected. 
\item[(2)]  Let ${\TPP(\mu)^{\circ}}$ be an irreducible, complete variety containing $\PP(\mu)^{\circ}\sqcup \Delta'_{\rm H}$ as an open subset. Then the complement of $\PP(\mu)^{\circ}\sqcup \Delta'_{\rm H}$ in $\TPP(\mu)^{\circ}$ is connected. 
\end{itemize}
\end{theorem}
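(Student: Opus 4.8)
The plan is to derive both parts from a single cohomological lemma about complements of affine open subsets, with affineness of the relevant stratum serving as the only geometric input. The lemma I would isolate is this: if $X$ is an irreducible complete variety with $\dim_{\bbC} X = d \geq 2$ and $U \subseteq X$ is a nonempty affine open subvariety, then $Z := X \setminus U$ is connected. I would work with $\bbQ$-coefficients throughout, so that the duality statements apply to the (possibly singular, but quotient-singular) coarse spaces underlying these orbifolds. Since $U$ is affine of dimension $d$, the compactly supported form of the Artin/affine Lefschetz vanishing theorem gives $H^i_c(U;\bbQ) = 0$ for all $i < d$; as $d \geq 2$, in particular $H^0_c(U;\bbQ) = H^1_c(U;\bbQ) = 0$. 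The set $Z$ is nonempty (an affine variety of positive dimension is never complete, so $U \neq X$) and proper, hence $H^*_c(Z) = H^*(Z)$ and $H^*_c(X) = H^*(X)$. The excision long exact sequence
\[
\cdots \to H^i_c(U;\bbQ) \to H^i(X;\bbQ) \to H^i(Z;\bbQ) \to H^{i+1}_c(U;\bbQ) \to \cdots
\]
then forces, via the vanishing of $H^0_c(U;\bbQ)$ and $H^1_c(U;\bbQ)$, an isomorphism $H^0(X;\bbQ) \cong H^0(Z;\bbQ)$. Irreducibility of $X$ gives $H^0(X;\bbQ) = \bbQ$, whence $H^0(Z;\bbQ) = \bbQ$ and $Z$ is connected.

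For part (1) I would apply this lemma with $U = \PP(\mu)^{\circ}$ and $X = \BPP(\mu)^{\circ}$: the hypothesis $\dim_{\bbC}\PP(\mu)^{\circ} \geq 2$ is given, and affineness of the meromorphic stratum $\PP(\mu)^{\circ}$ is exactly the intrinsic property recalled in the introduction, so the conclusion is immediate. For part (2) the open set is $U = \PP(\mu)^{\circ} \sqcup \Delta'_{\rm H}$, which is precisely the single-level locus $\MS(\mu)^{\circ} \setminus \Delta_{\rm V}$; note that $\dim_{\bbC} U = \dim_{\bbC}\PP(\mu)^{\circ} \geq 2$ since $\Delta'_{\rm H}$ is a divisor. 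To invoke the lemma I again need only that $U$ be affine, after which $\TPP(\mu)^{\circ} \setminus U$ is connected for free.

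The decisive difficulty is therefore entirely concentrated in showing that $U = \PP(\mu)^{\circ} \sqcup \Delta'_{\rm H}$ is affine, and this is the main obstacle. The geometric point in my favor is that a horizontal degeneration never destroys the genuine poles prescribed by $\mu$ and only adjoins further simple poles at the horizontal nodes, so $U$ is again a stratum of meromorphic differentials, now parametrized over stable curves all of whose nodes carry simple poles with opposite residues on the two branches. I therefore expect the argument establishing affineness of the smooth stratum to be sufficiently local and intrinsic---resting on the persistence of at least one pole together with the absence of complete subvarieties---to propagate across $\Delta'_{\rm H}$. Concretely, I would try either to produce an affine atlas on $U$ adapted to the plumbing coordinates at the horizontal nodes, or to run the affineness proof inductively in the number of horizontal nodes, degenerating one node at a time. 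Granting this, the cohomological machinery of the first paragraph applies verbatim and completes part (2); the reduction to the affineness claim is routine, but the claim itself is where the real work lies.
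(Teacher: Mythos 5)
Your part (1) is essentially the paper's own proof: both arguments rest on the affineness of $\PP(\mu)^{\circ}$ (from \cite[Theorem 1.2]{ChenNonvarying}) plus the classical fact that the complement of an affine open subset of an irreducible complete variety of dimension at least two is connected. The paper simply cites Goodman for that fact, whereas you prove it via compactly supported cohomology. One caveat on your proof of the lemma: the vanishing $H^i_c(U;\bbQ)=0$ for $i<d$ is \emph{false} for general singular affine varieties (e.g.\ $\bbC^2$ with two points identified is an irreducible affine surface with $H^1_c\cong\bbQ$), so the lemma as you state it, for arbitrary affine $U$, is not established by your argument. It does hold for $\bbQ$-homology manifolds, and you correctly restrict to quotient singularities, which covers the strata; but note that Goodman's theorem, as invoked in the paper, requires no such hypothesis.

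The genuine gap is in part (2). You correctly reduce the statement to the affineness of $\PP(\mu)^{\circ}\sqcup\Delta'_{\rm H}$ and correctly identify this as the crux, but you then leave it unproven, offering only heuristics---and the heuristics you propose would not work. An ``affine atlas adapted to plumbing coordinates'' proves nothing, since every variety admits an affine open cover while affineness is a global property; and ``absence of complete subvarieties'' does not imply affineness either (blowing up ten or more very general points on a smooth plane cubic and removing the strict transform of the cubic yields a quasi-projective surface with no complete curves that is not affine). Because affineness is global, no argument that is ``sufficiently local'' near the horizontal nodes can settle it. The paper closes this gap with a global positivity argument: the divisor class relations used in \cite[Section 4.4]{ChenNonvarying} to prove affineness of the open stratum remain valid over the locus of stable differentials with horizontal nodes only, so that proof applies verbatim to $\PP(\mu)^{\circ}\sqcup\Delta'_{\rm H}$ and exhibits it as an affine variety. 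Without this step, or some substitute for it, your part (2) is only a conditional statement.
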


\begin{proof}
In \cite[Theorem 1.2]{ChenNonvarying}, for meromorphic signatures $\mu$, 
it was shown that $ \PP(\mu)^{\circ}$ is an affine variety, which implies (1) by applying \cite[p. 166, Corollary of Theorem 1]{GoodmanAffine}.  

Additionally, the proof in \cite[Section 4.4]{ChenNonvarying} applies verbatim to $\PP(\mu)^{\circ}\sqcup \Delta'_{\rm H}$, because the divisor class relations therein still hold over the locus of stable differentials with horizontal nodes only.  Therefore, $\PP(\mu)^{\circ}\sqcup \Delta'_{\rm H}$ is also an affine variety, which implies (2) as well.  
\end{proof}

Applying Theorem~\ref{thm:main} to the multi-scale compactification immediately yields the following result. 

\begin{corollary}
\label{cor:multi}
Let $\mu$ be a meromorphic signature such that $\dim_{\bbC} \PP(\mu)^{\circ}\geq 2$. Then both the total boundary $\Delta_{\rm H}\cup \Delta_{\rm V}$ and the vertical boundary $\Delta_{\rm V}$ are connected.  
\end{corollary}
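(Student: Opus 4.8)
The plan is to deduce Corollary~\ref{cor:multi} from Theorem~\ref{thm:main} by identifying the right ambient complete variety and the right open subset in each of the two assertions, and then translating the resulting ``complement is connected'' statements into statements about the boundary divisors of $\MS(\mu)^{\circ}$. The key observation is that $\MS(\mu)^{\circ}$ is itself an irreducible complete variety (it is a complex orbifold, but connectedness of topological spaces is insensitive to the orbifold structure) containing $\PP(\mu)^{\circ}$ as an open dense subset, so it is a legitimate choice for $\BPP(\mu)^{\circ}$ in part~(1). With that choice, the complement $\MS(\mu)^{\circ}\setminus \PP(\mu)^{\circ}$ is exactly the total boundary $\Delta_{\rm H}\cup\Delta_{\rm V}$, and Theorem~\ref{thm:main}(1) gives its connectedness directly.

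For the vertical boundary, the plan is to apply part~(2) of the theorem rather than part~(1). Here I would take $\TPP(\mu)^{\circ}=\MS(\mu)^{\circ}$ again, and verify that $\PP(\mu)^{\circ}\sqcup\Delta'_{\rm H}$ is an open subset: indeed $\Delta'_{\rm H}=\Delta_{\rm H}\setminus(\Delta_{\rm H}\cap\Delta_{\rm V})$ is, by its very definition, the locus of stable differentials whose level graph has a single level, and adjoining it to $\PP(\mu)^{\circ}$ removes from the boundary precisely the divisorial strata that meet $\Delta_{\rm V}$; the remaining complement is the closed locus parameterizing multi-scale differentials whose level graphs have at least two levels, which is exactly $\Delta_{\rm V}$. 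Thus Theorem~\ref{thm:main}(2) yields connectedness of $\MS(\mu)^{\circ}\setminus(\PP(\mu)^{\circ}\sqcup\Delta'_{\rm H})=\Delta_{\rm V}$. The dimension hypothesis $\dim_{\bbC}\PP(\mu)^{\circ}\geq 2$ is inherited verbatim from the theorem.

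The main point requiring care---and the step I expect to be the one genuine obstacle---is the set-theoretic verification that the complement of $\PP(\mu)^{\circ}\sqcup\Delta'_{\rm H}$ in $\MS(\mu)^{\circ}$ is \emph{exactly} $\Delta_{\rm V}$, with no stray components and no parts of $\Delta_{\rm V}$ inadvertently excluded. Concretely, I must check that every multi-scale differential in $\MS(\mu)^{\circ}$ lying outside $\PP(\mu)^{\circ}$ either has a single-level graph with at least one horizontal edge (hence lies in $\Delta'_{\rm H}$) or has a level graph with at least two levels (hence lies in $\Delta_{\rm V}$), and that these two cases are complementary within the boundary. This is a matter of unwinding the definitions of $\Delta_{\rm H}$, $\Delta_{\rm V}$, and $\Delta'_{\rm H}$ from \cite[Section 2.3]{BCGGM3}: a boundary point of $\Delta_{\rm H}$ that also lies on $\Delta_{\rm V}$ has a multi-level graph and therefore belongs to $\Delta_{\rm V}$, so removing $\Delta'_{\rm H}$ together with the interior leaves precisely the multi-level locus.

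Once this identification is in hand, the corollary follows formally, since $\MS(\mu)^{\circ}$ satisfies the irreducibility and completeness hypotheses of both parts of Theorem~\ref{thm:main}. I would therefore phrase the proof as: apply part~(1) with $\BPP(\mu)^{\circ}=\MS(\mu)^{\circ}$ to obtain connectedness of $\Delta_{\rm H}\cup\Delta_{\rm V}$, and apply part~(2) with $\TPP(\mu)^{\circ}=\MS(\mu)^{\circ}$, after the set-theoretic identification above, to obtain connectedness of $\Delta_{\rm V}$.
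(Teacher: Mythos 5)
Your proposal is correct and matches the paper's argument exactly: the paper proves Corollary~\ref{cor:multi} by applying Theorem~\ref{thm:main} with the multi-scale compactification $\MS(\mu)^{\circ}$ as the ambient complete variety, part~(1) giving connectedness of $\Delta_{\rm H}\cup\Delta_{\rm V}$ and part~(2) giving connectedness of $\Delta_{\rm V}$ via the identification $\MS(\mu)^{\circ}\setminus(\PP(\mu)^{\circ}\sqcup\Delta'_{\rm H})=\Delta_{\rm V}$. The set-theoretic verification you flag as the delicate step is exactly the content the paper leaves implicit in the word ``immediately,'' and your unwinding of it is correct.
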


Note that any closed subset of an affine variety is affine; see, e.g., \cite[p. 106, Theorem 3]{MumfordRed}. Therefore, the above results also hold for subvarieties in the strata of meromorphic differentials. 

\begin{corollary}
\label{cor:sub}
Let $\mu$ be a meromorphic signature, and let $\calN$ be an irreducible subvariety in $\PP(\mu)^{\circ}$ such that $\dim_{\bbC}\calN \geq 2$. 
\begin{itemize}
\item[(1)] Let $\BNN$ be any irreducible, complete variety containing $\calN$ as an open subset. Then the complement of $\calN$ in ${\BNN}$ is connected. 
\item[(2)]  Let $\TNN$ be any irreducible, complete variety containing $\calN\sqcup \Delta'_{\rm H}(\calN)$ as an open subset, where $\Delta'_{\rm H}(\calN)$ is the locus of stable differentials with horizontal nodes only, contained in the multi-scale boundary of $\calN$. Then the complement of $\calN\sqcup \Delta'_{\rm H}(\calN)$ in $\TNN$ is connected. 
\end{itemize}
\end{corollary}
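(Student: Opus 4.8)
The plan is to mirror the proof of Theorem~\ref{thm:main}, reducing each part to the combination of an affineness statement with the connectedness criterion of \cite[p.~166, Corollary of Theorem~1]{GoodmanAffine}. The one new ingredient, already flagged in the remark preceding the statement, is that a closed subvariety of an affine variety is again affine \cite[p.~106, Theorem~3]{MumfordRed}.

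For part~(1), I would first note that $\calN$, being an irreducible closed subvariety of $\PP(\mu)^{\circ}$, sits inside the affine variety $\PP(\mu)^{\circ}$---whose affineness is \cite[Theorem~1.2]{ChenNonvarying}---and is therefore itself affine. Realizing $\calN$ as an open subset of the irreducible complete variety $\BNN$ with $\dim_{\bbC}\calN\geq 2$, Goodman's corollary then forces $\BNN\setminus\calN$ to be connected, exactly as in Theorem~\ref{thm:main}(1).

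For part~(2) the strategy is identical, once $\calN\sqcup\Delta'_{\rm H}(\calN)$ is exhibited as a closed subvariety of the affine variety $\PP(\mu)^{\circ}\sqcup\Delta'_{\rm H}$ coming from Theorem~\ref{thm:main}(2). Writing $\overline{\calN}$ for the closure of $\calN$ in $\MS(\mu)^{\circ}$, one has $\overline{\calN}\cap\PP(\mu)^{\circ}=\calN$ and, by definition, $\Delta'_{\rm H}(\calN)=\overline{\calN}\cap\Delta'_{\rm H}$, so that
\[
\calN\sqcup\Delta'_{\rm H}(\calN)=\overline{\calN}\cap\bigl(\PP(\mu)^{\circ}\sqcup\Delta'_{\rm H}\bigr).
\]
This is closed in $\PP(\mu)^{\circ}\sqcup\Delta'_{\rm H}$, hence affine, and of dimension $\dim_{\bbC}\calN\geq 2$ since $\Delta'_{\rm H}(\calN)$ is a divisor in $\overline{\calN}$. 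Applying Goodman's corollary to its open embedding in $\TNN$ yields the connectedness of the complement.

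The only step requiring care is matching definitions in part~(2): one must confirm that $\Delta'_{\rm H}(\calN)$ really coincides with the trace $\overline{\calN}\cap\Delta'_{\rm H}$ and that this trace is closed inside $\PP(\mu)^{\circ}\sqcup\Delta'_{\rm H}$ rather than only inside the full compactification $\MS(\mu)^{\circ}$. Once this identification is in place the argument is purely formal; no new degeneration analysis is needed, since the geometric content---in particular the divisor class relations over the horizontal-only boundary---has already been absorbed into the affineness statements of Theorem~\ref{thm:main}.
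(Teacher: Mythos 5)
Your proposal is correct and takes essentially the same route as the paper: the paper deduces the corollary from Theorem~\ref{thm:main} exactly via the observation that a closed subvariety of an affine variety is affine (\cite[p.~106, Theorem~3]{MumfordRed}), followed by Goodman's connectedness criterion. Your verification that $\calN\sqcup\Delta'_{\rm H}(\calN)=\overline{\calN}\cap\bigl(\PP(\mu)^{\circ}\sqcup \Delta'_{\rm H}\bigr)$ is closed in $\PP(\mu)^{\circ}\sqcup \Delta'_{\rm H}$ simply makes explicit a detail the paper leaves implicit.
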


Prominent examples of subvarieties in $\PP(\mu)^{\circ}$ include linear subvarieties locally defined by linear equations of period coordinates, as well as the strata of $k$-differentials which can be lifted into the strata of one-forms through the canonical covering construction; see \cite{LanneauConnected, ChenMoellerQuadratic, BCGGM2, ChenGendronKdiff} for  introductions to these subjects and related results. Note that a $k$-differential lifts to a meromorphic one-form under the canonical covering construction if and only if the $k$-differential has a pole of order at least $k$. Therefore, Corollary~\ref{cor:sub} implies the following special cases. 

\begin{corollary}
\label{cor:linear-k}
Let $\calN$ be an irreducible linear subvariety in a stratum of meromorphic differentials or an irreducible component of a stratum of $k$-differentials with a pole of order at least $k$. Suppose that $\dim_{\bbC}\calN \geq 2$. Then the total boundary and the vertical boundary of $\calN$ in the multi-scale compactification are both connected. 
\end{corollary}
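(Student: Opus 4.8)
The plan is to deduce both assertions from Corollary~\ref{cor:sub} by realizing $\calN$ as an irreducible subvariety of a stratum of meromorphic differentials, and then reading off the two boundary pieces exactly as in Corollary~\ref{cor:multi}.

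In the linear case there is little to do beyond unwinding definitions: a linear subvariety of a meromorphic stratum is by construction an irreducible algebraic subvariety $\calN \subseteq \PP(\mu)^{\circ}$, so the hypotheses of Corollary~\ref{cor:sub} are met once $\dim_{\bbC}\calN \geq 2$. Taking $\BNN$ to be the closure of $\calN$ in $\MS(\mu)^{\circ}$, part~(1) shows that the total boundary $\BNN\setminus\calN$ is connected; taking $\TNN$ to be the closure of $\calN\sqcup\Delta'_{\rm H}(\calN)$, part~(2) shows that the vertical boundary, which is the complement $\TNN\setminus(\calN\sqcup\Delta'_{\rm H}(\calN))$, is connected.

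For an irreducible component $\calN$ of a stratum of $k$-differentials with a pole of order at least $k$, the idea is to pass through the canonical covering construction. As recorded above, the assumption on the pole order is exactly what forces the lift to be a \emph{meromorphic} one-form, so the construction realizes $\calN$ as an irreducible subvariety $\hat{\calN}$ of a stratum $\PP(\hat\mu)^{\circ}$ of meromorphic abelian differentials, where $\hat\mu$ is determined by $\mu$ together with the ramification data of the cover. The covering map is finite and restricts to an isomorphism onto its image on the interior, so $\dim_{\bbC}\hat{\calN}=\dim_{\bbC}\calN\geq 2$ and irreducibility is preserved; Corollary~\ref{cor:sub} then applies directly to $\hat{\calN}$.

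The one step that genuinely requires care, and which I expect to be the main obstacle, is the compatibility of the two multi-scale compactifications under the lift: one must verify that the canonical covering construction extends to an identification of the multi-scale compactification of the $k$-differential stratum with that of the lifted abelian stratum, under which the horizontal and vertical boundary divisors of $\calN$ match those of $\hat{\calN}$. This is precisely the content of the extension of the covering construction to multi-scale $k$-differentials in \cite{BCGGM2}: a node is horizontal upstairs if and only if it is horizontal downstairs, and the number of levels in the level graph is preserved by passing to the canonical cover, so the decomposition into $\Delta'_{\rm H}$ and $\Delta_{\rm V}$ transports faithfully. Granting this compatibility, the connectedness of the total and vertical boundaries of $\hat{\calN}$ established in the previous paragraph transports back to $\calN$, completing the argument.
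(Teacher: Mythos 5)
Your proposal follows essentially the same route as the paper: both cases are reduced to Corollary~\ref{cor:sub}, the linear case directly and the $k$-differential case via the canonical covering construction, whose hypothesis on the pole order is exactly what guarantees a meromorphic one-form lift. The compatibility of the multi-scale boundary structure under the covering construction, which you correctly flag as the delicate step, is likewise handled in the paper by appeal to \cite{BCGGM2}, so your write-up matches the paper's argument with somewhat more detail spelled out.
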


Since affineness is preserved under finite morphisms, our results hold for all versions of the strata of meromorphic differentials---whether the zeros and poles are fully labeled, partly labeled, or unlabeled.  

Finally, we remark that it is an open question whether all strata of holomorphic differentials are affine varieties; see \cite[Theorem 1.1]{ChenNonvarying} for the known cases in low genus.  Therefore, any explicit boundary description of the strata of holomorphic differentials, such as those in \cite{DGLEnds}, remains precious. To this end, we provide an alternative argument for some key steps of the proof in \cite{DGLEnds} that shows the boundary of any component of the strata of holomorphic differentials is connected in the multi-scale compactification. 

\begin{claim}[{\cite[Claims A and C]{DGLEnds}}]
Let $\mu$ be a holomorphic signature. Then the following claims hold: 
\begin{itemize}
\item[(1)] The horizontal boundary $\Delta_{\rm H}$ of $\PP(\mu)^{\circ}$ is non-empty. 
\item[(2)] Every irreducible component of the vertical boundary $\Delta_{\rm V}$ of $\PP(\mu)^{\circ}$ intersects non-trivially with $\Delta_{\rm H}$. 
\end{itemize}
\end{claim}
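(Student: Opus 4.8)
The plan is to produce horizontal degenerations explicitly using square-tiled surfaces (origamis) together with the Teichm\"uller geodesic flow, proving (1) directly, and then to deduce (2) by applying (1) to the top level of a codimension-one vertical degeneration.

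\emph{Part (1).} I would start from the fact that square-tiled surfaces are dense in every connected component $\PP(\mu)^{\circ}$ of a holomorphic stratum, since the locus where all (absolute and relative) periods are Gaussian integers is dense and cut out by rational conditions, and connected components are open. Fix such a surface $(X,\omega)$. Its horizontal direction is completely periodic, so $(X,\omega)$ decomposes into finitely many horizontal cylinders whose core curves are closed horizontal geodesics. Applying the geodesic flow $g_t=\diag(e^t,e^{-t})$ and letting $t\to+\infty$, the moduli of all horizontal cylinders grow at the same exponential rate, so their core curves are pinched simultaneously in the limit. The key is then to identify this flat-geometric limit inside $\MS(\mu)^{\circ}$: pinching the core curve of a horizontal cylinder yields a node on whose two branches $\omega$ acquires simple poles with opposite residues, i.e.\ a horizontal node, and since all cylinders degenerate at one scale the limiting level graph has a single level. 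Hence the limit lies in $\Delta'_{\rm H}\subseteq\Delta_{\rm H}$, so $\Delta_{\rm H}\neq\emptyset$. Equivalently, the Veech group of $(X,\omega)$ is commensurable to $\SL_2(\bbZ)$, so the associated Teichm\"uller curve $\bbH/\Gamma$ is non-compact, and each cusp corresponds to such a cylinder degeneration.

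\emph{Part (2).} Let $D$ be an irreducible component of $\Delta_{\rm V}$; it corresponds to a two-level level graph $\Gamma$ with no horizontal edges, and a generic point of $D$ consists of a top-level differential $(X^{\top},\omega^{\top})$ and a bottom-level meromorphic differential $(X^{\bot},\omega^{\bot})$, matched by a prong-matching and the global residue condition. Because all edges of $\Gamma$ descend from top to bottom, the top branches carry only zeros (the poles created at the nodes appear on the bottom branches); thus the top level is a holomorphic stratum, and each of its components has genus $\geq 1$ and hence positive projectivized dimension. I would then apply part (1) to the connected component of the top-level stratum containing $(X^{\top},\omega^{\top})$: its horizontal boundary is non-empty, so $(X^{\top},\omega^{\top})$ can be degenerated to a stable differential with a horizontal node while $(X^{\bot},\omega^{\bot})$ is kept fixed. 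Since this degeneration changes neither the residues of the bottom-level poles nor the global residue condition among them, the resulting object remains a multi-scale differential on a graph refining $\Gamma$ by a single horizontal edge on the top level. It therefore lies in $\overline{D}=D$ and in $\Delta_{\rm H}$, giving a point of $D\cap\Delta_{\rm H}$, so every component of $\Delta_{\rm V}$ meets $\Delta_{\rm H}$.

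The step I expect to be the main obstacle is the precise dictionary between the flat-geometric cylinder degeneration and the combinatorial boundary strata of $\MS(\mu)^{\circ}$: verifying that the limit of $g_t(X,\omega)$ is \emph{exactly} a multi-scale differential with only horizontal nodes at a single level (and not a deeper stratum), and, for (2), that the top-level horizontal degeneration is compatible with the prong-matching and global residue condition defining $D$, so that the limit is a genuine point of the compactification lying in $D\cap\Delta_{\rm H}$. Once this identification is in place, both statements follow formally, with (2) reducing to a clean induction that applies (1) to the holomorphic top level.
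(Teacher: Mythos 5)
Your proposal follows essentially the same route as the paper: for (1), the paper also produces square-tiled surfaces in every component (via making the periods Gaussian integers by scaling) and invokes the fact that the arithmetic Teichm\"uller curves they generate are never complete and their closures meet only the horizontal boundary---exactly the cusp/cylinder-pinching mechanism you spell out; for (2), the paper likewise applies (1) to the holomorphic top-level stratum of a two-level graph. One small slip in your part (1): with $g_t=\diag(e^t,e^{-t})$, the moduli of \emph{horizontal} cylinders decay as $t\to+\infty$ (circumferences stretch by $e^t$, heights shrink by $e^{-t}$); they grow as $t\to-\infty$, while as $t\to+\infty$ it is the \emph{vertical} cylinders that are pinched. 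The slip is harmless: pinching the core curves of either periodic direction produces nodes with simple poles and opposite residues, which are horizontal nodes in the level-graph sense regardless of whether the residues are real or imaginary, so the limit still lies in $\Delta'_{\rm H}$ and both statements go through.
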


\begin{proof}
By scaling the differentials to be arbitrarily large, $\PP(\mu)^{\circ}$ contains differentials whose (unprojectivized) period coordinates have integral real and imaginary parts. These differentials correspond to square-tiled surfaces---that is, branched covers of the square torus with a unique branch point; see \cite[Lemma 3.1]{EskinOkounkovVolumes}. Square-tiled surfaces generate (arithmetic) Teichm\"uller curves under the $\GL^{+}_2(\bbR)$-action on $\PP(\mu)^{\circ}$. Teichm\"uller curves are never complete, and their closures intersect only the horizontal boundary; see \cite[Section 3.3]{ChenMoellerAbelian}. This justifies the first claim.  

Let $\Delta_{\Gamma}^{\circ}$ be an irreducible vertical boundary divisor of $\PP(\mu)^{\circ}$, where $\Gamma$ denotes a two-level graph. Note that the top-level stratum of $\Gamma$ parameterizes holomorphic differentials. Therefore, by the above claim, differentials parameterized in the top-level stratum can degenerate to produce horizontal edges. This implies that $\Delta_{\Gamma}^{\circ}$ and $\Delta_{\rm H}$ intersect non-trivially. 
\end{proof}

Now, the connectedness of the total boundary of $\PP(\mu)^{\circ}$ follows from the connectedness of the horizontal boundary $\Delta_{\rm H}$, as noted in \cite[Claim B]{DGLEnds}; the latter, in turn, follows from the classification of the connected components of the strata of meromorphic differentials with two simple poles, given in \cite{BoissyConnected}. 

%%%%%%%%%%%%%%%%
\section{Acknowledgements} 
%%%%%%%%%%%%%%%%

This research was supported in part by National Science Foundation Grant DMS-2301030, Simons Travel Support for Mathematicians, and a Simons Fellowship. The author thanks Matteo Costantini, Quentin Gendron, Samuel Grushevsky, Myeongjae Lee, and Martin M\"oller for inspiring discussions on related topics. 

\printbibliography 

\end{document}